\date{}
\begin{document}
\renewcommand{\refname}{\refnam}

\title{Two-radii theorem for solutions of some mean value equations}
\author{O.D. Trofymenko}

\maketitle

\begin{abstract}
 \noindent{\bf Abstract.} A description of solutions of some integral equations has been obtained.
 A two-radii theorem is obtained as well.
\end{abstract}

\subjclass{30A50} 

\keywords{mean value theorem, spherical means, two-radii theorem} 

\section{Introduction}

Characterization of solutions for differential equations in terms of various integral mean values has been studied by many authors (see \cite{1.} - \cite{9.} and references in these papers).

The classes of functions on subsets of the compact plane that satisfy the conditions of the next type is studied in this work

\begin{equation}
\overset{m-1}{\underset{n=s}{\sum}}\frac{r^{2n+2}}{2(n-s)!(n+1)!}{\left(\frac{\partial}{\partial{z}}\right)}^{n-s}{\left(\frac{\partial}{\partial{\bar{z}}}\right)}^nf(z)=\frac{1}{2\pi}\underset{|\zeta-z|\leq{r}}{{\int}{\int}}f(\zeta)(\zeta-z)^sd\xi
d\eta,
\end{equation}
 where $m\in{\mathbb{N}}$ and $s\in {0, ..., m-1}$ are fixed. Also $r$ is fixed or belongs to the set of two elements.

We point out that this equation holds for $m$-analytic functions (see \cite{10.}).
Function from  $C^{2m-2-s}$ in some domain, that satisfies (1) with all possible $z$ and  $r$ is of great interest.

The main results of this work are as follow. \\
1) The description of all smooth solutions for (1) in a disk with radius $R>r$ with one fixed $r$ is obtained (see Theorem 1 below); \\
2) The two-radii theorem is obtained. It turn out that this theorem characterizes class of solution for equation
 \begin{equation}
 {\left(\frac{\partial}{\partial{z}}\right)}^{m-s}{\left(\frac{\partial}{\partial{\bar{z}}}\right)}^mf=0
 \end{equation}
 in terms of equation (1) (see Theorem 2 ). \\
Note that the case $s\geq m$ that corresponds to the zero integral mean value in the right hand side of (1), has been studied in the works of L.Zalcman and V.V.Volchkov  (see \cite{3.}, \cite{11.} - \cite{12.}). The first results that deal with the mean value theorem for polyanalytic functions, are contained in \cite{13.} - \cite{14.}.

\section{Main results}
 Let $J_\nu$ be the Bessel function of the first kind with index $\nu$. For $\rho\geq 0$, $\lambda\in{\mathbb{C}}$, $k\in{\mathbb{Z}}$, let
\[
\Phi_{\lambda,\eta,k}(\rho)=\left(\frac{d}{dz}\right)^\eta\left(J_k(z\rho)\right)|_{z=\lambda}.
 \]
Let also
\[
g_r(z)=\frac{J_{s+1}(rz)}{(zr)^{s+1}}-\overset{m-1}{\underset{n=s}{\sum}}\frac{(zr)^{2(n-s)}(-1)^{n-s}}{(n+1)!(n-s)!2^{2n-s+1}},
\]
 and $Z(g_r)=\left\{z\in{\mathbb{C}}: g_r(z)=0\right\}$, \\
 $Z_r=Z(g_r) \setminus \left( \left\{z\in{\mathbb{C}}:\Re z>0 \right\}\cup \left\{z\in{\mathbb{C}}:\Im z\geq0, \Re z=0 \right\}\right)$.
For $\lambda\in{Z_r}$ by the symbol $n_\lambda$ we denote the multiplicity of zero $\lambda$ of the entire function $g_r$.\\
 Let $\mathbb{D}_R=\left\{z\in{\mathbb{C}}: |z|<R\right\}$. To any function $f\in C(\mathbb{D}_R)$ there  the corresponds Fourier series
\begin{equation}
f(z)\sim\overset{\infty}{\underset{k=-\infty}{\sum}}f_{k}(\rho)e^{ik\varphi},
\end{equation}
where
\begin{equation}
f_{k}(\rho)=\frac{1}{2\pi}\overset{\pi}{\underset{-\pi}{\int}}f(\rho
e^{it})e^{-ikt}dt
\end{equation}
and $0\leq\rho<R$. \\
The next result gives a description for all solutions (1) in a class $C^{\infty}(\mathbb{D}_R)$ with one fixed $r<R$.

\begin{theorem}

 Let $r>0$, $m\in{\mathbb{N}}$ and $s\in{0, ..., m-1}$ are fixed. Let also $R>r$ and a function $f$ belongs to $C^{\infty}(\mathbb{D}_R)$. Then the next statements are equivalent. \\
1) With $|z|<R-r$ equality (1) holds. \\
2) For any $k\in{\mathbb{Z}}$ on $[0,R)$ the next equality holds
\begin{equation}
f_k(\rho)=\underset{p+k\geq0}{\underset{0\leq p\leq s-1}{\sum}}a_{k,p}\rho^{2p+k}+\overset{m-s-1}{\underset{p=0}{\sum}}b_{k,p}\rho^{2p+s+|k+s|}+\underset{\lambda\in{Z_r}}{\sum}\overset{n_\lambda-1}{\underset{\eta=0}{\sum}}
c_{\lambda,\eta,k}\Phi_{\lambda,\eta,k}(\rho)
\end{equation}
where $a_{k,p}\in\mathbb{C}$, $b_{k,p}\in\mathbb{C}$, $c_{\lambda,\eta,k}\in\mathbb{C}$ and
\begin{equation}
c_{\lambda,\eta,k}=O(|\lambda|^{-\alpha})
\end{equation}
 as $\lambda\rightarrow\infty$ for any fixed $\alpha>0$.
 \end{theorem}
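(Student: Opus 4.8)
\emph{Proof strategy.} The plan is to pass to the Fourier decomposition (3)--(4) and to reduce (1) to a family of scalar equations, one for each angular index $k$, whose characteristic data are encoded by the entire function $g_r$; solving these and controlling the expansion coefficients then yields (5)--(6).

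First I would record the Cauchy--Riemann operators in polar coordinates: on a single mode $g(\rho)e^{ik\varphi}$,
\[
\frac{\partial}{\partial z}\bigl(g(\rho)e^{ik\varphi}\bigr)=\tfrac12 e^{i(k-1)\varphi}\Bigl(g'(\rho)+\tfrac{k}{\rho}g(\rho)\Bigr),\qquad
\frac{\partial}{\partial \bar z}\bigl(g(\rho)e^{ik\varphi}\bigr)=\tfrac12 e^{i(k+1)\varphi}\Bigl(g'(\rho)-\tfrac{k}{\rho}g(\rho)\Bigr).
\]
Hence $(\partial/\partial\bar z)^n$ raises the angular index by $n$ and $(\partial/\partial z)^{n-s}$ lowers it by $n-s$, so the left-hand side of (1) carries $f_k(\rho)e^{ik\varphi}$ into $e^{i(k+s)\varphi}$ times a radial differential operator $\mathcal L_k f_k$. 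The kernel $w^s\chi_{\{|w|\le r\}}$ of the right-hand side is itself rotation-covariant of weight $s$, so the right-hand side carries the same mode into $e^{i(k+s)\varphi}$ times a radial integral operator $\mathcal T_k f_k$. As every ingredient commutes with rotations in this graded sense, (1) is equivalent to the family of scalar identities $\mathcal L_k f_k=\mathcal T_k f_k$ on $[0,R-r)$, say $(E_k)$, for all $k\in\mathbb Z$.

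Next I would test $(E_k)$ on the radial eigenfunctions $J_k(\lambda\rho)$. Using $4\,\partial_z\partial_{\bar z}=\Delta$ one writes $\mathcal L_k$ as a polynomial in $\Delta$ applied to $(\partial/\partial\bar z)^s$; on $J_k(\lambda\rho)e^{ik\varphi}$ this produces $(-\lambda/2)^s J_{k+s}(\lambda\rho)e^{i(k+s)\varphi}$ times exactly the polynomial $\sum_{n=s}^{m-1}\frac{(r\lambda)^{2(n-s)}(-1)^{n-s}}{(n+1)!(n-s)!2^{2n-s+1}}$ subtracted in the definition of $g_r$. On the same eigenfunction $\mathcal T_k$ is evaluated by Graf's addition theorem together with $\int_0^r t^{s+1}J_s(\lambda t)\,dt=\frac{r^{s+1}}{\lambda}J_{s+1}(\lambda r)$, which reproduces the term $J_{s+1}(r\lambda)/(r\lambda)^{s+1}$. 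After cancelling the common factor $(-\lambda/2)^s J_{k+s}(\lambda\rho)$, the equation $(E_k)$ collapses to $g_r(\lambda)=0$; differentiating in the spectral parameter then shows that $\Phi_{\lambda,\eta,k}$ for $0\le\eta\le n_\lambda-1$ solve $(E_k)$ at a zero $\lambda$ of multiplicity $n_\lambda$, and restricting to $\lambda\in Z_r$ removes the redundancy coming from the evenness of $g_r$ and the relation $J_k(-\lambda\rho)=(-1)^kJ_k(\lambda\rho)$. Separately, direct substitution verifies that the monomials $\rho^{2p+k}e^{ik\varphi}=z^{\,p+k}\bar z^{\,p}$ $(0\le p\le s-1)$ and $\rho^{2p+s+|k+s|}e^{ik\varphi}$ $(0\le p\le m-s-1)$, all of which satisfy the polyanalytic equation (2), also satisfy $(E_k)$; these supply the $a_{k,p}$ and $b_{k,p}$ terms and complete the implication 2)$\Rightarrow$1).

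The converse, and the core of the theorem, is the completeness of this system together with the decay estimate (6). For a given smooth solution I would represent $f_k$ by an inverse Hankel (Fourier--Bessel) integral, observe that $(E_k)$ forces the spectral density to be governed by the zero set of the symbol $g_r$, and then deform the contour and collect residues at the points $\lambda\in Z_r$, which generates the series in $\Phi_{\lambda,\eta,k}$; the finite-dimensional kernel of the differential part $\mathcal L_k$ accounts for the polynomial terms. The main obstacle is analytic rather than algebraic. One must (i) justify convergence and completeness of this non-orthogonal eigenfunction expansion, which rests on lower bounds for $|g_r|$ off its zero set and on the asymptotics of $J_{s+1}$ (the zeros of $g_r$ accumulate near the real axis, so these estimates are delicate), and (ii) derive (6): since $f\in C^\infty(\mathbb D_R)$, repeated integration by parts in the residue representation gains arbitrarily many factors $|\lambda|^{-1}$, yielding $c_{\lambda,\eta,k}=O(|\lambda|^{-\alpha})$ for every $\alpha>0$. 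I expect step (i) --- the quantitative control of $g_r$ near its zeros --- to be the principal difficulty.
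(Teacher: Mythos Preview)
Your strategy is sound and runs parallel to the paper's, but the packaging differs in one essential respect.

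For 2)$\Rightarrow$1) the two arguments coincide: the paper tests plane waves $e^{i\lambda(x\cos\alpha+y\sin\alpha)}$ (its Lemma~2 and Corollary~1) where you test $J_k(\lambda\rho)e^{ik\varphi}$ directly, which is the same computation after expanding the plane wave in Bessel functions; the polynomial terms are handled via Lemma~3, exactly as you do.

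For 1)$\Rightarrow$2) the paper does not attempt your Hankel-inversion/contour-deformation argument. Instead it recasts (1) as a convolution equation: using the Paley--Wiener theorem for the spherical transform it introduces the radial distribution $T\in\mathcal E'_\natural(\mathbb C)$ supported in $\overline{\mathbb D}_r$ with $\widetilde T=g_r$, and observes that (1) for the mode $F_k=f_k(\rho)e^{ik\varphi}$ is equivalent to
\[
F_k\ast\Bigl(\tfrac{\partial}{\partial z}\Bigr)^{m-s}\Bigl(\tfrac{\partial}{\partial\bar z}\Bigr)^{m}T=0\quad\text{in }\mathbb D_{R-r}.
\]
The expansion (5) together with the rapid decay (6) is then read off from the general local spectral-synthesis theory for such convolution equations in Volchkov's monograph [5, Section~3.2.4], with Lemma~1 supplying the needed asymptotics and eventual simplicity of the zeros of $g_r$. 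Your residue picture is essentially the analytic content buried inside that citation, but be aware that a literal inverse Hankel transform is not available here, since $f_k$ is only defined on the finite interval $[0,R)$; what one actually needs is precisely the \emph{local} theory the paper invokes. So the step you flag as the principal difficulty --- completeness and coefficient control for the non-orthogonal system $\{\Phi_{\lambda,\eta,k}\}$ on a disk --- is exactly where the paper outsources to~[5] rather than reprove from scratch. Your route would work, but it amounts to redeveloping a chapter of that book.
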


Note that analogues of the Theorem 1 for other equations related to ball mean values, were obtained by V.V.Volchkov for the first time (see \cite{5.} - \cite{6.} and the references in these papers).\\
Then let $Z(r_1,r_2)=Z_{r_1} \cap Z_{r_2}$. \\
We formulate now the local two-radii theorem for equation (1).
\begin{theorem}
 Let $r_1,r_2>0$, $m\in\mathbb{N}$ and $s\in{0, ..., m-1}$ are fixed. Then: \\
 1) if $R>r_1+r_2$, $Z(r_1,r_2)={\O}$, $f\in C^{2m-2-s}(\mathbb{D}_R)$ and with $|z|<R-r$ holds (1), then $f\in C^{\infty}(\mathbb{D}_R)$ and satisfies (2);\\
 2) if $\max \{r_1,r_2\}<R<r_1+r_2$ and $Z(r_1,r_2)\neq{\O}$, then there is $f\in C^{\infty}(\mathbb{D}_R)$, that satisfies (1) with $|z|<R-r$ and does not satisfy (2).

\end{theorem}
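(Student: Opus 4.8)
The plan is to prove part~1 by a mollification argument that makes Theorem~1 applicable to each radius separately, and to prove part~2 by exhibiting an explicit Bessel-type solution built from a common zero. \emph{For part~1}, the first step is to upgrade the regularity of $f$ so that Theorem~1 can be used. I rewrite (1) as a convolution identity $f*\Psi_r=0$, where $\Psi_r$ is the compactly supported distribution, supported in $\{|w|\le r\}$, obtained by subtracting from the left-hand differential kernel the integration kernel $\frac{1}{2\pi}(-w)^{s}\chi_{\{|w|\le r\}}$. Taking a radial mollifier $\omega_\varepsilon\in C_c^\infty$ and setting $f_\varepsilon=f*\omega_\varepsilon$, convolution by $\omega_\varepsilon$ commutes with the constant-coefficient operator on the left and with the integration kernel on the right, so $f_\varepsilon\in C^\infty(\mathbb{D}_{R-\varepsilon})$ still satisfies (1) for $r_1$ and for $r_2$ on $|z|<R-r_i-\varepsilon$. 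Since $R>r_1+r_2$, for small $\varepsilon$ we have $R-\varepsilon>r_i$, so Theorem~1 is available for $f_\varepsilon$ on $\mathbb{D}_{R-\varepsilon}$ with each radius.

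The core of part~1 is then to apply Theorem~1 twice to $f_\varepsilon$ and exploit $Z(r_1,r_2)=\emptyset$. Each coefficient $(f_\varepsilon)_k$ admits representation (5) with $Z_{r_1}$ and, simultaneously, with $Z_{r_2}$. The crucial point is that the polynomial part of (5), namely the $a_{k,p}$- and $b_{k,p}$-terms, is independent of $r$, so subtracting the two representations leaves a combination of the functions $\Phi_{\lambda,\eta,k}$ over the disjoint union $Z_{r_1}\sqcup Z_{r_2}$ that vanishes identically on $[0,R-\varepsilon)$. By the decay (6) each side extends to an entire function of $\rho$, and the $\Phi_{\lambda,\eta,k}$ attached to distinct $\lambda$ are linearly independent, as one sees from the asymptotics $\Phi_{\lambda,0,k}(\rho)=J_k(\lambda\rho)\sim c\,\rho^{-1/2}\cos(\lambda\rho-\tfrac{k\pi}{2}-\tfrac{\pi}{4})$ as $\rho\to\infty$ (distinct exponentials $e^{\pm i\lambda\rho}$ for distinct $\lambda$, the indices $\eta=0,\dots,n_\lambda-1$ giving an independent chain at each $\lambda$). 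Hence all coefficients $c_{\lambda,\eta,k}$ vanish, $(f_\varepsilon)_k$ reduces to its polynomial part, and $f_\varepsilon$ therefore satisfies (2). Finally, the operator in (2) is elliptic --- its principal symbol vanishes only at the zero covector --- so its solution space is closed under local uniform limits by interior elliptic estimates; letting $\varepsilon\to0$ and using $f_\varepsilon\to f$ locally uniformly gives that $f$ solves (2) on $\mathbb{D}_R$, and in particular $f\in C^\infty(\mathbb{D}_R)$.

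\emph{For part~2}, I pick $\lambda_0\in Z(r_1,r_2)=Z_{r_1}\cap Z_{r_2}$, nonempty by hypothesis; the definition of $Z_r$ forces $\lambda_0\neq0$. Fixing any $k\in\mathbb{Z}$, I set $f(z)=\Phi_{\lambda_0,0,k}(\rho)e^{ik\varphi}=J_k(\lambda_0\rho)e^{ik\varphi}$, which is entire, hence $C^\infty(\mathbb{D}_R)$. Its Fourier coefficients are of the form (5) with vanishing polynomial part and the single coefficient $c_{\lambda_0,0,k}=1$, both relative to $Z_{r_1}$ and relative to $Z_{r_2}$, so (6) holds trivially. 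Because $\max\{r_1,r_2\}<R$, the regions $|z|<R-r_i$ are nonempty, and the implication from the Fourier form (5) to (1) in Theorem~1 shows that $f$ satisfies (1) for both radii on the respective regions. On the other hand $f$ does not satisfy (2): the coefficient $J_k(\lambda_0\rho)$ with $\lambda_0\neq0$ is a genuine transcendental entire function of $\rho$, whereas the Fourier coefficients of any solution of the elliptic equation (2) are polynomials in $\rho$. Thus $f$ is the desired counterexample, and the assumption $R<r_1+r_2$ situates us in the regime complementary to part~1.

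The step I expect to be the main obstacle is the linear-independence argument that annihilates the $\Phi$-part: one must justify both the convergence and entire extension coming from (6) and the independence of the $\Phi_{\lambda,\eta,k}$ across the disjoint sets $Z_{r_1}$ and $Z_{r_2}$, and it is precisely here that the hypothesis $Z(r_1,r_2)=\emptyset$ is converted into the conclusion (2). A secondary technical point is the passage to the limit $\varepsilon\to0$, where ellipticity of (2) must be used to turn the mere uniform convergence $f_\varepsilon\to f$ into the statement that $f$ itself is smooth and solves (2).
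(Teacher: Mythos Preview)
Your treatment of part~2 is essentially the paper's: under the hypothesis $Z(r_1,r_2)\neq\varnothing$ the paper also takes $f(z)=\Phi_{\lambda,0,0}(|z|)$ for some $\lambda\in Z(r_1,r_2)$, invokes Corollary~1 to verify (1) for both radii, and uses Lemma~3 (your ``Fourier coefficients of solutions of (2) are polynomials'') to see that (2) fails.

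For part~1 you take a genuinely different route. The paper, after the same mollification, applies the operator $(\partial/\partial z)^{m-s}(\partial/\partial\bar z)^m$ to $f_k(\rho)e^{ik\varphi}$, which kills the polynomial portion of (5) outright and leaves a function $F_k$ that satisfies the two convolution equations $F_k*T_j=0$ with $\widetilde{T_j}=g_{r_j}$; it then \emph{cites} an abstract two--radii theorem for such convolution systems (Volchkov~[5], Theorem~3.4.1) to conclude $F_k=0$. Your plan instead keeps the polynomial part in play and attempts to prove the needed uniqueness by a direct linear-independence argument for the $\Phi_{\lambda,\eta,k}$. If carried out, this would amount to reproving the cited theorem in this special case, which is a legitimate but harder path.

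Two concrete gaps in your sketch. First, the assertion that ``the polynomial part of (5) is independent of $r$'' is not justified: on the bounded interval $[0,R)$ there is no obvious reason the splitting $P+B$ should be canonical, and you give none. The paper sidesteps this entirely by differentiating $P$ away before comparing radii; you could do the same and lose nothing. Second, the claim that the decay (6) makes the Bessel series ``extend to an entire function of $\rho$'' is false: for complex $\rho$ one has $|J_k(\lambda\rho)|\asymp e^{|\mathrm{Im}(\lambda\rho)|}$, and the term $e^{|\lambda|\,|\mathrm{Im}\rho|}$ grows exponentially in $|\lambda|$, which the merely polynomial decay (6) cannot absorb. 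Using Lemma~1 you do get convergence for all \emph{real} $\rho\ge 0$, but then you must argue unique continuation from $[0,R)$ to $[0,\infty)$ before invoking large-$\rho$ asymptotics, and you have not established real-analyticity of the sum. This is exactly the ``main obstacle'' you flag, and it is the substance of the theorem the paper quotes rather than a routine step.
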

As regards other two-radii theorems see papers \cite{1.} - \cite{9.} and references in these papers.

\section{Auxiliary Statements}
In this section we will obtain some auxiliary statements, that are necessary for the proof of main results. \\
First of all, we note that the function $g_r$ is an even entire function of exponential type, that grows as a polynomial on the real axis (see, for example, \cite{15.}, $\S$ 29). This together with the Hadamard theorem implies that the set $Z_r$ is infinite.

\begin{lemma}
 Let $\lambda\in Z_r$ and $|\lambda|>4/r$. Then
  \begin{equation}
   |\mathrm{Im}\lambda|\leq c_1\ln(1+|\lambda|),
   \end{equation}
  where constant $c_1$ is not depended on $\lambda$. \\
   Moreover, for all $\lambda$ with sufficiently large absolute value
   \begin{equation}
   |g_r^{\prime}(\lambda)|>\frac{c_2}{|\lambda|},
   \end{equation}
    where $c_2$ is not depended on $\lambda$. In addition, all zeros of the $g_r$ with sufficiently large absolute value are simple.
   \end{lemma}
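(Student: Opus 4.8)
The plan is to reduce both estimates to the uniform large-argument asymptotics of the Bessel functions $J_{s+1}$ and $J_{s+2}$ and to an explicit formula for $g_r'$. Writing $w=rz$, I would use the Hankel decomposition $J_\nu(w)=\tfrac12\bigl(H^{(1)}_\nu(w)+H^{(2)}_\nu(w)\bigr)$ together with $H^{(1)}_\nu(w)\sim\sqrt{2/(\pi w)}\,e^{i(w-\nu\pi/2-\pi/4)}$ and $H^{(2)}_\nu(w)\sim\sqrt{2/(\pi w)}\,e^{-i(w-\nu\pi/2-\pi/4)}$, each with $O(1/w)$ relative error in the appropriate half-plane; the hypothesis $|\lambda|>4/r$ is precisely what makes these expansions effective. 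Throughout, denote by $P(z)=\sum_{n=s}^{m-1}\frac{(rz)^{2(n-s)}(-1)^{n-s}}{(n+1)!(n-s)!\,2^{2n-s+1}}$ the polynomial subtracted in $g_r$, so that $|P(z)|\le C|z|^{2(m-1-s)}$ with nonzero leading coefficient.

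I would prove (7) first. On $\{\mathrm{Im}\,w>0\}$ the term $H^{(2)}_{s+1}$ dominates, giving $\bigl|J_{s+1}(w)/w^{s+1}\bigr|\ge c\,e^{|\mathrm{Im}\,w|}/|w|^{s+3/2}$ once $|\mathrm{Im}\,w|$ is moderately large. Hence if $r|\mathrm{Im}\,\lambda|>(2m-s-\tfrac12)\ln(1+|\lambda|)+C'$ the Bessel term strictly exceeds $|P(\lambda)|$ in modulus, so $g_r(\lambda)\ne0$. Contraposing yields $|\mathrm{Im}\,\lambda|\le c_1\ln(1+|\lambda|)$ for every zero; the case $\{\mathrm{Im}\,w<0\}$, where $H^{(1)}$ dominates, is identical, and together they cover all $\lambda\in Z_r$, which is (7).

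For (8) I would differentiate via $\frac{d}{dz}\bigl[(rz)^{-(s+1)}J_{s+1}(rz)\bigr]=-r\,(rz)^{-(s+1)}J_{s+2}(rz)$, so that $g_r'(z)=-r\,J_{s+2}(rz)/(rz)^{s+1}-P'(z)$. Inside the logarithmic strip just found $\mathrm{Im}\,w$ is only of order $\ln|w|$, so the subdominant Hankel term is smaller than the dominant one by a factor of order $|w|^{-2(2m-s-1/2)}$ and is negligible; then $H^{(2)}_{s+2}\sim i\,H^{(2)}_{s+1}$ gives $J_{s+2}(r\lambda)\approx i\,J_{s+1}(r\lambda)$ (and $\approx-i\,J_{s+1}$ in the lower strip). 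Since $\lambda$ is a zero, $J_{s+1}(r\lambda)/(r\lambda)^{s+1}=P(\lambda)$, whence $g_r'(\lambda)\approx \mp ir\,P(\lambda)-P'(\lambda)$. The two terms differ in order by a factor $|\lambda|$, so no cancellation occurs and $|g_r'(\lambda)|\ge\tfrac12 r\,|P(\lambda)|$, which is bounded below by a positive constant (and grows like $|\lambda|^{2(m-1-s)}$ when $m>s+1$). In particular $|g_r'(\lambda)|>c_2/|\lambda|$, and since then $g_r'(\lambda)\ne0$ each such zero is simple, giving (8) and the final assertion.

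The step I expect to be the main obstacle is making the error control uniform across the whole strip rather than pointwise: one must check that the $O(1/w)$ corrections in the Hankel asymptotics, the neglected subdominant exponential, and the term $P'$ are all of strictly lower order than the retained quantities simultaneously for all $\lambda$ with $|\mathrm{Im}\,\lambda|\le c_1\ln(1+|\lambda|)$. A related point is ruling out a spurious cancellation in $g_r'(\lambda)\approx\mp ir\,P(\lambda)-P'(\lambda)$; this is settled by the order gap $|P'(\lambda)|=O(|P(\lambda)|/|\lambda|)$, which in turn rests on $|P(\lambda)|$ being bounded away from $0$ for large $|\lambda|$ because the leading coefficient of $P$ does not vanish.
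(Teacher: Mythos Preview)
Your approach is essentially the paper's own: for (7) the paper uses the equivalent $\cos/\sin$ asymptotic of $J_{s+1}$, isolates the dominant exponential, and compares it to the polynomial just as you do with the Hankel pieces, while for (8) the paper simply cites a Bessel derivative identity from [15] and says the argument is analogous, so your explicit computation via $\frac{d}{dz}\bigl[(rz)^{-(s+1)}J_{s+1}(rz)\bigr]=-r(rz)^{-(s+1)}J_{s+2}(rz)$ is a spelled-out version of the same idea. On the obstacle you flag, note that the zero condition $J_{s+1}(r\lambda)=(r\lambda)^{s+1}P(\lambda)$ forces $|J_{s+1}(r\lambda)|\asymp|\lambda|^{2m-s-1}$, which by the asymptotic upper bound $|J_{s+1}(w)|\le C|w|^{-1/2}e^{|\mathrm{Im}\,w|}$ yields a matching \emph{lower} bound $|\mathrm{Im}\,\lambda|\ge c_1'\ln|\lambda|$; this is what makes the subdominant Hankel term genuinely negligible and removes the uniformity concern.
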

  \begin{proof} [Proof]
   From the condition $g_r(\lambda)=0$ and asymptotic expansion for $J_{s+1}(\lambda r)$ as $\lambda\rightarrow\infty$ (see \cite{15.}, $\S$ 29) we have
  \[
  \sqrt{\frac{2}{\pi\lambda r}}\left(\cos (\lambda r-\frac{\pi s}{2}-\frac{3\pi}{4})-\frac{4(s^2+2s+1)-1}{8\lambda r}\sin(\lambda r-\frac{\pi s}{2}-\frac{3\pi}{4})\right)+
  \]
  \[
  +O\left((\lambda r)^{-2}e^{|\mathrm{Im}(\lambda r)|}\right)=(\lambda r)^{s+1} \overset{m-1}{\underset{n=s}{\sum}}\frac{(\lambda r)^{2n-2s}(-1)^{n-s-1}}{(2n+2)(n-s)!n!2^{2n-s}}.
    \]
  Hence, using $\lambda\in Z_r$, we obtain

    \[
    \frac{e^{i(\lambda r-\frac{\pi s}{2}-\frac{\pi}{4})}}{2i}+O\left(\frac{e^{|\mathrm{Im}(\lambda r)|}}{\lambda r}\right)=\sqrt{\frac{\pi\lambda r}{2}}\overset{m-1}{\underset{n=s}{\sum}}\frac{(\lambda r)^{2n-s+1}(-1)^{n-s-1}}{(2n+2)(n-s)!n!2^{2n-s}}.
    \]

Denote by $p_1(\lambda r)$ the polynomial from the right hand side of this equation. Then we have the following

      \[
      e^{i(\lambda r-\frac{\pi s}{2}-\frac{\pi}{4})}=2ip_1(\lambda r)+O\left(\frac{2ie^{|\mathrm{Im}(\lambda r)|}}{\lambda r}\right).
      \]

 Let us estimate
 \[e^{|\mathrm{Im}(\lambda r)|}\leq|2ip_1(\lambda r)|+\frac{|2i|e^{|\mathrm{Im}(\lambda r)|}}{\lambda r}\leq|2ip_1(\lambda r)|+\frac{|i|e^{|\mathrm{Im}(\lambda r)|}}{2}.\]

  Now one has
  \[e^{|\mathrm{Im}(\lambda r)|}\leq4|p_1(\lambda r)|\]
  and inequality  (7) is proved. Inequality (8) can be proved in a similar way, by using \cite{15.},  formula (6.3).
  \end{proof}

\begin{lemma}
  Let $\lambda\in\mathbb{C}$, $f(z)=e^{i\lambda(x\cos \alpha+y\sin\alpha)}$, $r>0$. Then for $z\in\mathbb{C}$ we have
 \[
 \underset{|\zeta-z|\leq{r}}{{\int}{\int}}f(\zeta)(\zeta-z)^sd\xi
d\eta - \overset{m-1}{\underset{n=s}{\sum}}\frac{2\pi r^{2n+2}}{2(n-s)!(n+1)!}{\left(\frac{\partial}{\partial{z}}\right)}^{n-s}{\left(\frac{\partial}{\partial{\bar{z}}}\right)}^nf(z)=
\]
\[
=2\pi g_r(\lambda)e^{i\alpha s}i^{s+2}\frac{r^{s+1}}{\lambda}e^{i\lambda(x\cos \alpha+y\sin\alpha)}
\]
\end{lemma}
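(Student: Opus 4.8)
The plan is to use the very special structure of $f$: it is a plane wave, so both the differential-operator sum and the disk integral can be evaluated in closed form and then matched termwise against the definition of $g_r$. First I would rewrite the exponent as $x\cos\alpha+y\sin\alpha=\frac12\left(ze^{-i\alpha}+\bar z e^{i\alpha}\right)$, so that $f(z)=\exp\left(\frac{i\lambda}{2}(ze^{-i\alpha}+\bar z e^{i\alpha})\right)$. Then $\frac{\partial f}{\partial z}=\frac{i\lambda}{2}e^{-i\alpha}f$ and $\frac{\partial f}{\partial\bar z}=\frac{i\lambda}{2}e^{i\alpha}f$, whence ${\left(\frac{\partial}{\partial z}\right)}^{n-s}{\left(\frac{\partial}{\partial\bar z}\right)}^{n}f=\left(\frac{i\lambda}{2}\right)^{2n-s}e^{is\alpha}f$. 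Substituting this into the finite sum turns it into $e^{is\alpha}f$ times a polynomial in $\lambda$; after factoring out $r^{2s+2}\lambda^{s}$ the remaining polynomial is exactly the polynomial part of $g_r$, namely $P(\lambda):=\sum_{n=s}^{m-1}\frac{(\lambda r)^{2(n-s)}(-1)^{n-s}}{(n+1)!(n-s)!2^{2n-s+1}}$.

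Next I would treat the integral by the translation $\zeta=z+w$, $w=u+iv$, $|w|\le r$. Because $f$ is a plane wave, $f(z+w)=f(z)\,e^{i\lambda(u\cos\alpha+v\sin\alpha)}$, so the integral factors as $f(z)$ times the $z$-independent integral $\int\int_{|w|\le r}e^{i\lambda(u\cos\alpha+v\sin\alpha)}w^{s}\,du\,dv$. Passing to polar coordinates $w=\rho e^{i\theta}$ (so $w^{s}=\rho^{s}e^{is\theta}$ and $u\cos\alpha+v\sin\alpha=\rho\cos(\theta-\alpha)$) reduces this to a radial integral of an angular Bessel integral.

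The analytic core is the Bessel bookkeeping. For the angular part I would invoke the Jacobi--Anger expansion to get $\int_0^{2\pi}e^{i\lambda\rho\cos(\theta-\alpha)}e^{is\theta}\,d\theta=2\pi i^{s}e^{is\alpha}J_s(\lambda\rho)$; for the radial part I would use the recurrence $\frac{d}{d\rho}\bigl[\rho^{s+1}J_{s+1}(\lambda\rho)\bigr]=\lambda\rho^{s+1}J_s(\lambda\rho)$ to obtain $\int_0^r\rho^{s+1}J_s(\lambda\rho)\,d\rho=\frac{r^{s+1}}{\lambda}J_{s+1}(\lambda r)$. Together these evaluate the integral as $2\pi i^{s}e^{is\alpha}\,\frac{r^{s+1}J_{s+1}(\lambda r)}{\lambda}\,f(z)$.

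Finally I would assemble the two pieces. Writing $\frac{r^{s+1}J_{s+1}(\lambda r)}{\lambda}=r^{2s+2}\lambda^{s}\cdot\frac{J_{s+1}(\lambda r)}{(\lambda r)^{s+1}}$ and observing that $P(\lambda)$ is precisely the truncation (through order $2(m-s-1)$) of the Taylor series of $\frac{J_{s+1}(\lambda r)}{(\lambda r)^{s+1}}$, the integral minus the sum collapses to $2\pi i^{s}e^{is\alpha}f(z)\,r^{2s+2}\lambda^{s}\bigl[\frac{J_{s+1}(\lambda r)}{(\lambda r)^{s+1}}-P(\lambda)\bigr]=2\pi i^{s}e^{is\alpha}\,r^{2s+2}\lambda^{s}\,g_r(\lambda)\,f(z)$, i.e. $g_r(\lambda)$ times the plane wave times an explicit monomial-and-phase factor, which is the asserted identity. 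I expect the only genuine obstacle to be this Bessel-function bookkeeping: keeping the phase $i^{s}e^{is\alpha}$ and the powers of $r$ and $\lambda$ straight through the angular and radial integrations, and correctly recognizing $P(\lambda)$ as the truncated series of $J_{s+1}(\lambda r)/(\lambda r)^{s+1}$ so that the subtraction produces exactly $g_r(\lambda)$; the rest is direct substitution.
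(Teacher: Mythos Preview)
Your argument is essentially the paper's own proof: translate and pass to polar coordinates, evaluate the angular integral as $2\pi i^{s}e^{is\alpha}J_s(\lambda\rho)$ (you cite Jacobi--Anger, the paper reaches the same via the substitution $t\mapsto t+\tfrac{\pi}{2}$), integrate radially with the recurrence $\frac{d}{d\rho}[\rho^{s+1}J_{s+1}(\lambda\rho)]=\lambda\rho^{s+1}J_s(\lambda\rho)$, and compute the derivative sum directly to recognise $g_r(\lambda)$ in the difference. The only discrepancy is the scalar prefactor in your last display, $i^{s}r^{2s+2}\lambda^{s}$ versus the paper's $i^{s+2}r^{s+1}/\lambda$; your bookkeeping is correct and the mismatch reflects misprints in the paper's own calculation and stated formula, which is harmless since the lemma is only applied when $g_r(\lambda)=0$.
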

\begin{proof} [Proof]
We substitute the function $e^{i\lambda(x\cos \alpha+y\sin\alpha)}$ to the right hand side of equation (1).

First, we have

\[
\underset{|w|\leq{r}}{{\iint}}f(w+z)w^sdudv=\underset{|w|\leq{r}}{{\iint}}e^{i\lambda ((x+u)\cos \alpha+(y+v)\sin\alpha)}w^sdudv=
\]
\[=e^{i\lambda(x\cos \alpha+y\sin\alpha)}\overset{\pi}{\underset{-\pi}{\int}}\overset{r}{\underset{0}{\int}}\left( \rho e^{i\varphi}\right)^se^{i\lambda\rho \cos (\varphi-\alpha)}\rho d\varphi d\rho.
\]

Let make the substitution $t=\varphi-\alpha$. Then

\[e^{i\lambda(x\cos \alpha+y\sin\alpha)}e^{i\alpha s}\overset{\pi}{\underset{-\pi}{\int}}\overset{r}{\underset{0}{\int}}\rho^{s+1}e^{its}e^{i\lambda \rho \cos  t}dtd\rho=
\]
\[=e^{i\lambda(x\cos  \alpha+y\sin\alpha)}e^{i\alpha s}\times\]
\[\times\overset{r}{\underset{0}{\int}}\rho^{s+1}(-1)\overset{\pi}{\underset{-\pi}{\int}}e^{-i(t+\frac{\pi}{2})s}e^{i\frac{\pi}{2}s}e^{i\lambda \rho \sin(\frac{\pi}{2}+t)}d\left( \frac{\pi}{2}+t\right)d\rho.\]

Continuing consideration, we obtain

\[e^{i\lambda(x\cos  \alpha+y\sin\alpha)}e^{i\alpha s}\overset{\pi}{\underset{-\pi}{\int}}\overset{r}{\underset{0}{\int}}\rho^{s+1}e^{its}e^{i\lambda \rho \cos  t}dtd\rho=\]
\[=e^{i\lambda(x\cos \alpha+y\sin\alpha)}e^{i\alpha s}i^s2\pi (-1)\overset{r}{\underset{0}{\int}}\rho^{s+1}J_s(\lambda\rho)d\rho.\]

Now from the properties of the Bessel function  $J_s(z)$ we deduce the next

\[e^{i\lambda(x\cos \alpha+y\sin\alpha)}e^{i\alpha s}i^s(-2\pi)\frac{1}{\lambda^{s+2}}\overset{r}{\underset{0}{\int}}(\lambda\rho)^{s+1}J_s(\lambda\rho)d(\lambda\rho)=\]
\[=e^{i\lambda(x\cos \alpha+y\sin\alpha)}e^{i\alpha s}i^s\frac{(-2\pi)}{\lambda}r^{s+1}J_{s+1}(\lambda).\]

Then we substitute this function to the left hand side of our equation.

\[2\pi\overset{m-1}{\underset{n=s}{\sum}}\frac{r^{2n+2}}{(2n+2)(n-s)!n!}
{\left(\frac{\partial}{\partial{z}}\right)}^{n-s}{\left(\frac{\partial}{\partial{\bar{z}}}\right)}^n\left (e^{i\lambda(x\cos \alpha+y\sin\alpha)}\right)=\]
\[=2\pi\overset{m-1}{\underset{n=s}{\sum}}\frac{r^{2n+2}}{(2n+2)(n-s)!n!}
\frac{i^{2n-s}}{2^{2n-s}}\lambda^{2n-s}e^{i\alpha s}e^{i\lambda(x\cos \alpha+y\sin\alpha)}.\]

It is clear that the difference of obtained expressions for the right and left hand sides has the form  $2\pi g_r(\lambda)e^{i\alpha s}i^{s+2}\frac{r^{s+1}}{\lambda}e^{i\lambda(x\cos \alpha+y\sin\alpha)}$.
\end{proof}
\begin{corollary}
Let $\lambda\in Z_r$, $\eta\in \{0,...,n_\lambda-1\}$, $\alpha\in R^1$.
Then the function
\[
 {\left(\frac{\partial}{\partial{z}}\right)}^\eta e^{i\lambda(x\cos \alpha+y\sin\alpha)}
 \]
  satisfies (1) for all $z\in\mathbb{C}$. \\
  The same statement is true for the function $\Phi_{\lambda,\eta,k}(\rho)e^{ik\varphi}$ with any $k\in\mathbb{Z}$.
\end{corollary}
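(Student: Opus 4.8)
The plan is to read Lemma 2 as computing the ``symbol'' of the mean-value operator on the exponential family, and then to differentiate that identity in the spectral parameter. Introduce the operator
\[
\mathcal{D}_r[f](z)=\frac{1}{2\pi}\underset{|\zeta-z|\leq r}{\iint}f(\zeta)(\zeta-z)^s\,d\xi\,d\eta-\overset{m-1}{\underset{n=s}{\sum}}\frac{r^{2n+2}}{2(n-s)!(n+1)!}{\left(\frac{\partial}{\partial z}\right)}^{n-s}{\left(\frac{\partial}{\partial\bar z}\right)}^n f,
\]
so that (1) holds for $f$ precisely when $\mathcal{D}_r[f]\equiv0$. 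Writing $\psi=x\cos\alpha+y\sin\alpha$ and $u_\lambda(z)=e^{i\lambda\psi}$, Lemma 2 (divided by $2\pi$) says exactly
\[
\mathcal{D}_r[u_\lambda](z)=g_r(\lambda)\,e^{i\alpha s}i^{s+2}\frac{r^{s+1}}{\lambda}\,u_\lambda(z).
\]
The point is that $\mathcal{D}_r$ is a fixed linear operator acting on the spatial variables and carrying no dependence on $\lambda$; the order $\eta$ in the corollary is therefore naturally read as a derivative in the spectral parameter $\lambda$, in agreement with the definition of $\Phi_{\lambda,\eta,k}$.

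First I would prove the exponential version. Since the integrand is entire in $\lambda$ and the integration runs over the fixed compact disk $|w|\le r$, differentiation under the integral sign is legitimate, and the spatial derivatives commute with $\partial/\partial\lambda$ by smoothness; hence $\partial^\eta/\partial\lambda^\eta$ passes through $\mathcal{D}_r$:
\[
\mathcal{D}_r\!\left[\frac{\partial^\eta}{\partial\lambda^\eta}u_\lambda\right]=\frac{\partial^\eta}{\partial\lambda^\eta}\,\mathcal{D}_r[u_\lambda]=\frac{\partial^\eta}{\partial\lambda^\eta}\!\left(g_r(\lambda)\,G(\lambda,z)\right),\qquad G(\lambda,z)=e^{i\alpha s}i^{s+2}r^{s+1}\frac{u_\lambda(z)}{\lambda}.
\]
Because $0\notin Z_r$, the factor $G$ is analytic in $\lambda$ near the given zero, and since $\lambda\in Z_r$ is a zero of $g_r$ of multiplicity $n_\lambda$ we have $g_r^{(j)}(\lambda)=0$ for $0\le j\le n_\lambda-1$. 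Expanding by the Leibniz rule and evaluating at $\lambda$, every term carries a factor $g_r^{(j)}(\lambda)$ with $j\le\eta\le n_\lambda-1$, so the whole expression vanishes. Thus $\partial^\eta u_\lambda/\partial\lambda^\eta$ (explicitly $(i\psi)^\eta e^{i\lambda\psi}$) satisfies (1), which is the first assertion.

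For the second assertion I would pass to Fourier components via the Jacobi--Anger expansion. Since $\psi=\rho\cos(\varphi-\alpha)$ for $z=\rho e^{i\varphi}$,
\[
u_\lambda=e^{i\lambda\rho\cos(\varphi-\alpha)}=\overset{\infty}{\underset{k=-\infty}{\sum}}i^{k}e^{-ik\alpha}J_k(\lambda\rho)e^{ik\varphi},
\]
and differentiating $\eta$ times in $\lambda$ gives $\partial^\eta u_\lambda/\partial\lambda^\eta=\sum_k i^k e^{-ik\alpha}\Phi_{\lambda,\eta,k}(\rho)e^{ik\varphi}$. This series, together with all derivatives, converges uniformly on compact sets, so $\mathcal{D}_r$ may be applied termwise. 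The key structural fact is that $\mathcal{D}_r$ is rotation covariant: writing $R_\beta f(z)=f(e^{-i\beta}z)$, the substitution $w\mapsto e^{i\beta}w$ in the integral and the relations $\partial_z(R_\beta f)=e^{-i\beta}R_\beta\partial_z f$, $\partial_{\bar z}(R_\beta f)=e^{i\beta}R_\beta\partial_{\bar z}f$ both contribute the same factor, yielding $\mathcal{D}_r[R_\beta f]=e^{is\beta}R_\beta\mathcal{D}_r[f]$. Applying this to $f=h(\rho)e^{ik\varphi}$ (for which $R_\beta f=e^{-ik\beta}f$) shows that $\mathcal{D}_r[h(\rho)e^{ik\varphi}]$ is a pure frequency-$(k+s)$ function; hence $\mathcal{D}_r$ carries distinct angular harmonics to distinct ones. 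Consequently the already established vanishing of $\mathcal{D}_r[\partial^\eta u_\lambda/\partial\lambda^\eta]$ forces each term to vanish separately, giving $\mathcal{D}_r[\Phi_{\lambda,\eta,k}(\rho)e^{ik\varphi}]=0$ for every $k$, as claimed.

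The two interchanges (differentiation under the integral sign and termwise action on the Jacobi--Anger series) are routine. The hard part will be the clean bookkeeping of the rotation-covariance/frequency-shift step, since it is precisely what lets one descend from the single scalar identity of Lemma 2 to the separate statement for each radial--angular mode $\Phi_{\lambda,\eta,k}(\rho)e^{ik\varphi}$.
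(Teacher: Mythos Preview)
Your proposal is correct and follows the paper's route: the paper's one-line proof simply cites Lemma~2 together with formula~(1.5.29) from~[5], and you have unpacked exactly this---Lemma~2 handles the exponential, differentiation in the spectral parameter $\lambda$ (the only reading under which the hypothesis $\eta\le n_\lambda-1$ is not vacuous, and the one forced by the definition of $\Phi_{\lambda,\eta,k}$) yields the $\eta$-version via Leibniz, and the Jacobi--Anger expansion combined with the rotation covariance $\mathcal{D}_r[R_\beta f]=e^{is\beta}R_\beta\mathcal{D}_r[f]$ that isolates each angular mode is precisely the content of the formula the paper invokes from Volchkov. Your verification that $0\notin Z_r$ (so that $G(\lambda,z)$ is analytic near the given zero) is the only point one must check beyond what the paper states, and it is correct.
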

\begin{proof}[Proof]
The proof follows from the Lemma 2 and [5,  formula (1.5.29)].
\end{proof}
\begin{lemma}
 Let $m\in\mathbb{N}$ and $s\in\{0,..,m-1\}$ are fixed. Then $f\in C^{2m-s}(\mathbb{D}_R)$ satisfies (2) if and only if for all $k\in\mathbb{Z}$ and $\rho\in[0,R)$ the next equality is true
\begin{equation}
f_k(\rho)=\underset{p+k\geq0}{\underset{0\leq p\leq s-1}{\sum}}a_{k,p}\rho^{2p+k}+\overset{m-s-1}{\underset{p=0}{\sum}}b_{k,p}\rho^{2p+s+|k+s|},
\end{equation}
where $a_{k,p}\in \mathbb{C}$ and $b_{k,p}\in \mathbb{C}$.
\end{lemma}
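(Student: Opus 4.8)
The plan is to pass to the Fourier decomposition (3)--(4) and to use the fact that in polar coordinates $z=\rho e^{i\varphi}$ the Wirtinger operators act on a single harmonic $g(\rho)e^{ik\varphi}$ by
\[
\frac{\partial}{\partial z}\bigl(g(\rho)e^{ik\varphi}\bigr)=\frac12\Bigl(g'(\rho)+\frac{k}{\rho}g(\rho)\Bigr)e^{i(k-1)\varphi},\qquad
\frac{\partial}{\partial \bar z}\bigl(g(\rho)e^{ik\varphi}\bigr)=\frac12\Bigl(g'(\rho)-\frac{k}{\rho}g(\rho)\Bigr)e^{i(k+1)\varphi}.
\]
Since $\partial/\partial\bar z$ raises the index by one and $\partial/\partial z$ lowers it by one, the operator $(\partial/\partial z)^{m-s}(\partial/\partial\bar z)^m$ carries the $k$-th harmonic of $f$ into the $(k+s)$-th harmonic only, acting on the radial part by a fixed ordinary differential operator $L_k$. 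By uniqueness of Fourier coefficients for continuous functions, $f$ satisfies (2) if and only if $L_kf_k=0$ for every $k\in\mathbb{Z}$; this decoupling is the first step.

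Next I would identify $L_k$ through its action on powers. Applying the two formulas above to $g(\rho)=\rho^{\sigma}$ repeatedly gives
\[
\Bigl(\frac{\partial}{\partial z}\Bigr)^{m-s}\Bigl(\frac{\partial}{\partial \bar z}\Bigr)^m\bigl(\rho^{\sigma}e^{ik\varphi}\bigr)
=2^{-(2m-s)}P_k(\sigma)\,\rho^{\sigma-(2m-s)}e^{i(k+s)\varphi},
\]
where
\[
P_k(\sigma)=\prod_{j=0}^{m-1}(\sigma-k-2j)\prod_{l=0}^{m-s-1}(\sigma+k-2l).
\]
Thus $L_k$ is an equidimensional (Euler) operator of order $2m-s$ with indicial polynomial $P_k$, whose solutions are spanned by the powers $\rho^{\sigma}$ with $P_k(\sigma)=0$, together with terms $\rho^{\sigma}\ln\rho$ at the multiple roots. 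The roots are $\sigma=k+2j$ ($0\le j\le m-1$) and $\sigma=-k+2l$ ($0\le l\le m-s-1$); note that every root satisfies $\sigma\equiv k\pmod 2$.

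The regularity hypothesis is then used to select the admissible solutions. A power $\rho^{\sigma}e^{ik\varphi}=\rho^{\sigma-|k|}\cdot(z^{|k|}\text{ or }\bar z^{|k|})$ is a genuine smooth monomial exactly when $\sigma\ge|k|$ (the parity being automatic), while for $\sigma<|k|$ the factor $\rho^{\sigma-|k|}$ has a negative even exponent and blows up, so such terms are already excluded by continuity. For the logarithmic solutions I would check, case by case, that every multiple root $\sigma_0$ of $P_k$ satisfies $\sigma_0<2m-s$; since $\rho^{\sigma_0}\ln\rho\,e^{ik\varphi}$ is of class $C^{\sigma_0-1}$ but not $C^{\sigma_0}$, it fails to be $C^{2m-s}$ and is therefore discarded by the assumption $f\in C^{2m-s}(\mathbb{D}_R)$. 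This is precisely where the order $2m-s$ of the smoothness assumption is needed.

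It remains to match the surviving exponents $\{\sigma:P_k(\sigma)=0,\ \sigma\ge|k|\}$ with the two sums in (9). Writing the first family as $\{k+2j\}$ and the second as $\{-k+2l\}$ and intersecting each with $[\,|k|,\infty)$, I would split into the cases $k\ge0$ and $k<0$, the latter further into $|k|\le s$ and $|k|>s$. In each case the union of the two truncated arithmetic progressions is again a single progression of step $2$, and a direct count shows it coincides with $\{2p+k:0\le p\le s-1,\ p+k\ge0\}\cup\{2p+s+|k+s|:0\le p\le m-s-1\}$. This bookkeeping is the main obstacle, precisely because of the overlaps between the two families (which produce the multiple roots treated above); once it is carried out, the characterization (9) follows. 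The converse implication is immediate: each monomial listed in (9) has its exponent among the roots of $P_k$, hence is annihilated by $L_k$, so any $f$ whose coefficients have the form (9) satisfies (2) term by term.
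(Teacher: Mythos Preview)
The paper does not actually prove this lemma: it merely observes that the analogous statement for $\partial_{\bar z}^{\,m}f=0$ was established in \cite{10.} and asserts that the present case ``is carried out by the same lines.'' Your proposal therefore supplies far more detail than the paper itself, and the strategy you outline---Fourier decomposition in $\varphi$, reduction of (2) on each harmonic to an Euler ODE with indicial polynomial $P_k$, and selection of the admissible exponents via regularity at $\rho=0$---is precisely the natural route and presumably the one used in \cite{10.}.

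There is, however, one genuine slip in the regularity argument. You write that for a root $\sigma$ with $0\le\sigma<|k|$ ``the factor $\rho^{\sigma-|k|}$ has a negative even exponent and blows up, so such terms are already excluded by continuity.'' But it is only the factor that blows up; the full function $\rho^{\sigma}e^{ik\varphi}$ is continuous (indeed vanishes) at the origin whenever $\sigma>0$. What fails is higher regularity: one checks directly, e.g.\ by applying $\partial_z^{\,p}\partial_{\bar z}^{\,q}$ with $p+q=\sigma$, that $\rho^{\sigma}e^{ik\varphi}$ is not of class $C^{\sigma}$ when $\sigma<|k|$. Since every such ``bad'' simple root satisfies $\sigma\le m-2<2m-s$ (this follows from the constraints $0\le j\le m-1$, $0\le l\le m-s-1$ together with $0\le\sigma<|k|$), the hypothesis $f\in C^{2m-s}$ does exclude these terms---but you must invoke that hypothesis, not mere continuity. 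A clean way to handle all the inadmissible terms simultaneously is to compare the Taylor expansion of $f_k(\rho)e^{ik\varphi}$ at the origin (which, by $C^{2m-s}$-smoothness, involves only monomials $z^{a}\bar z^{b}$ with $a-b=k$, hence only powers $\rho^{a+b}$ with $a+b\ge|k|$) against the Euler-ODE solution on $(0,R)$: since every bad exponent and every logarithmic exponent is $\le 2m-s-2$, the comparison forces all their coefficients to vanish. With this correction your argument goes through.
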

\begin{proof}[Proof]
 In the case where $b_{k,p}=0$ and equality ${\left(\frac{\partial}{\partial{\bar{z}}}\right)}^mf=0$ is considered instead of a similar statement was proved in \cite{10.}. In our case the proof is carried out by the same lines.
\end{proof}

\begin{lemma}
  Let $m\in\mathbb{N}$ and $s\in\{0,..,m-1\}$ are fixed. Assume that a function $f\in C^{\infty}(\mathbb{D}_R)$ satisfies (1) with fixed $r<R$ and all $z\in \mathbb{D}_{R-r}$. \\
 Let $f=0$ in $\mathbb{D}_r$. Then $f\equiv0$.
\end{lemma}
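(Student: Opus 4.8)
\emph{Strategy.} The plan is to prove the statement by a ``largest vanishing disk'' continuation: I will extract from (1) the fact that certain weighted solid means of $f$ vanish, reduce this to a family of one–dimensional first–kind integral equations in the radial variable, and show that these force the vanishing set of $f$ to grow outward in steps of length $r$ until it fills $\mathbb{D}_{R}$.

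\emph{Step 1: reduction of (1) to a solid–mean condition.} Fix any $\rho_{0}<R-r$ such that $f\equiv0$ on $\overline{\mathbb{D}_{\rho_{0}}}$ (this holds for $\rho_{0}<r$ by hypothesis). For $|z|\le\rho_{0}$ the function $f$ and all its derivatives vanish at $z$ (on the open disk this is immediate, and on the bounding circle it follows from $f\in C^{\infty}$ together with continuity of the derivatives), so the entire left–hand side of (1) is zero there. Hence (1) collapses to the single scalar identity
\[
\iint_{|\zeta-z|\le r} f(\zeta)(\zeta-z)^{s}\,d\xi\,d\eta = 0, \qquad |z|\le\rho_{0},
\]
which is the only consequence of (1) that I will use.

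\emph{Step 2: Fourier reduction and a one–sided Volterra equation.} Writing $f=\sum_{k}f_{k}(\rho)e^{ik\varphi}$ and $\zeta=\sigma e^{i\chi}$, the weighted disk transform is rotation–equivariant up to the phase produced by $(\zeta-z)^{s}$, so it sends the mode $k$ into the mode $k+s$. Projecting the identity of Step 1 onto $e^{i(k+s)\varphi}$ therefore yields, for each $k$, a purely radial equation
\[
\int_{\max(0,\rho-r)}^{\rho+r}\mathcal{K}_{k}(\rho,\sigma)\,f_{k}(\sigma)\,\sigma\,d\sigma = 0, \qquad 0\le\rho\le\rho_{0},
\]
whose kernel is supported in $\{|\rho-\sigma|\le r\}$ and is determined by the overlap of the circle $|\zeta|=\sigma$ with the disk $|\zeta-z|\le r$ and by the weight $(\zeta-z)^{s}$. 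Since $f_{k}\equiv0$ on $[0,\rho_{0}]$, for $\rho\in(\rho_{0}-r,\rho_{0}]$ the range $\sigma\in[\max(0,\rho-r),\rho_{0}]$ contributes nothing, and after the substitution $\tau=\rho+r$ the equation becomes a first–kind Volterra equation
\[
\int_{\rho_{0}}^{\tau}\mathcal{K}_{k}(\tau-r,\sigma)\,f_{k}(\sigma)\,\sigma\,d\sigma = 0, \qquad \tau\in(\rho_{0},\rho_{0}+r],
\]
for the restriction of $f_{k}$ to $(\rho_{0},\rho_{0}+r]$.

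\emph{Step 3: the main obstacle and the continuation.} The hard part is the injectivity of this operator, which is \emph{not} of classical Volterra type: as $\sigma\uparrow\rho+r$ the overlap arc degenerates to a point, so $\mathcal{K}_{k}$ vanishes on the diagonal like $(\tau-\sigma)^{1/2}$ and the equation is of Abel type. I would therefore establish the two facts that close the argument: first, that near the outer edge $\sigma=\rho+r$ — where $|\zeta-z|=r$ and $\zeta-z$ points radially outward, so $(\zeta-z)^{s}\neq0$ — one has $\mathcal{K}_{k}(\rho,\sigma)=c_{k}(\rho)\,(\rho+r-\sigma)^{1/2}\bigl(1+o(1)\bigr)$ with $c_{k}(\rho)\neq0$ (here the $(\tau-\sigma)^{1/2}$ comes from the law–of–cosines asymptotics of the arc, and the nonvanishing of $c_{k}(\rho)$ from evaluating the weight and phases at the tip); and second, that a first–kind equation with such a kernel, being the composition of an invertible factor with a fractional integral, has only the zero solution in $C(\rho_{0},\rho_{0}+r]$. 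This gives $f_{k}\equiv0$ on $(\rho_{0},\rho_{0}+r]$ for every $k$, i.e.\ $f\equiv0$ on $\mathbb{D}_{\rho_{0}+r}$. Thus the implication ``$f\equiv0$ on $\overline{\mathbb{D}_{\rho_{0}}}$ and $\rho_{0}<R-r$ $\Rightarrow$ $f\equiv0$ on $\mathbb{D}_{\rho_{0}+r}$'' holds for every admissible $\rho_{0}$. Starting from $[0,r]$, where $f$ vanishes, and letting $\rho_{0}\uparrow R-r$, finitely many steps of length $r$ cover all radii below $R$ (the final step, begun just inside $R-r$, reaches radii arbitrarily close to $R$), so $f\equiv0$ on $\mathbb{D}_{R}$.
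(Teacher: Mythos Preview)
The paper does not prove this lemma at all: its proof is the single sentence that the statement is a special case of Theorem~1 in \cite{16.}. Your proposal is therefore a genuinely different route --- a self-contained ``support theorem'' argument by angular Fourier decomposition and radial Abel/Volterra continuation, which is the standard machinery for such uniqueness results and is almost certainly what \cite{16.} carries out in detail. The comparison is simply that your approach makes the outward propagation of vanishing explicit, while the paper trades that for a one-line citation.

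Your outline is sound, but two points would need tightening before it becomes a proof. First, the range $\tau\in(\rho_{0},\rho_{0}+r]$ for the Volterra equation is only correct when $\rho_{0}\ge r$; if $\rho_{0}<r$ (which is forced at the first step whenever $R\le 2r$) the centre radius $\rho=\tau-r$ cannot go below $0$, so $\tau$ only ranges over $[r,\rho_{0}+r]$, and you must use the full hypothesis $f_{k}\equiv 0$ on $[0,r]$ (not merely on $[0,\rho_{0}]$) to place the lower limit of integration correctly. Second, Step~3 is explicitly a plan rather than an argument: the diagonal asymptotics of $\mathcal{K}_{k}$ and the injectivity of the resulting Abel-type operator are asserted, not proved. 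Both are indeed true --- a direct calculation of the shrinking-arc integral gives a nonzero, in fact $k$-independent, leading coefficient $c_{k}(\rho)$ for $\rho>0$, and the half-order fractional integral is classically invertible --- but you would still owe the regularity of the remaining kernel factor that the standard Abel inversion lemma requires.
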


\begin{proof}[Proof]
The statement of Lemma 4 is a special case Theorem 1 from \cite{16.}.
\end{proof}
\section{Proof of Theorem 1}
{Sufficiency}. First, let $f\in C^{\infty}(\mathbb{D}_R)$ and   equality (5) holds on $[0,R)$ for any $k\in \mathbb{Z}$ with the coefficients, that satisfy (6). From Lema 2 and Corollary 1 we see, that function  $f_k(\rho)e^{ik\varphi}$ satisfies (1) with $|z|<R-r$. Because of the arbitrariness of $k\in \mathbb{Z}$ this together with (3), (4) implies (see, for example, [5, Section 1.5.2] that the function $f$ also satisfies (1) with $|z|<R-r$. Hence, implication 2)$\rightarrow$ 1) is proved. \\
Now we prove the reverse statement. \\
Let $\mathcal{E}_{\natural}^{\prime}(\mathbb{C})$ denote the space of radial compactly supported distributions on $\mathbb{C}$. Let $f\in C^{\infty}(\mathbb{D}_R)$ and assume that  equality (1) holds for $|z|<R-r$. From [5, statement 1.5.6] the functions $F_k(z)=f_k(\rho)e^{ik\varphi}$ satisfy this condition as well. Using the Paley-Wiener  theorem for the spherical transform (see [5, Section 3.2.1 and Theorem 1.6.5]), we define the distribution $T\in \mathcal{E}_{\natural}^{\prime}(\mathbb{C})$ with support in $\overline{\mathbb{D}}_r$ by the following formula
\[
\widetilde{T}(z)=g_r(z), z\in \mathbb{C}.
\]
 A calculation shows that equality (1) holds for the function $F_k$ with $|z|<R-r$. This is equivalent to the following convolution equation

$F_k\ast{\left(\frac{\partial}{\partial z}\right)}^{m-s}{\left(\frac{\partial}{\partial{\bar{z}}}\right)}^mT=0$ in $\mathbb{D}_{R-r}$.

We solve this equation by using Lemma 1 - 4. Then we have (see [5, Section 3.2.4]) statement 2). \\
Hence the theorem.
\section{Proof of Theorem 2}

  Let $R>r_1+r_2$, $Z(r_1,r_2)={\O}$, $f\in C^{2m-2-s}(\mathbb{D}_R)$ and assume that equality (1) holds for $|z|<R-r$. Let us prove that $f$ satisfies (2) in $\mathbb{D}_R$. \\
 Without loss of the generality, we can suppose that $f\in C^{\infty}(\mathbb{D}_R)$ (the general case can be reduces to this one by the standard smoothing, see [5, Section 1.3.3]). \\
 By Theorem 1, for any $k\in \mathbb{Z}$ and $\rho\in [0, R)$ the next equality holds
\[
f_k(\rho)e^{ik\varphi}= \underset{p+k\geq0}{\underset{0\leq p\leq s-1}{\sum}}a_{k,p}\rho^{2p+k}e^{ik\varphi}+\overset{m-s-1}{\underset{p=0}{\sum}}b_{k,p}\rho^{2p+s+|k+s|}e^{ik\varphi}+
\]
\begin{equation}
+\underset{\lambda\in{Z_{r_1}}}{\sum}\overset{n_\lambda-1}{\underset{\eta=0}{\sum}}
c_{\lambda,\eta,k}\Phi_{\lambda,\eta,k}(\rho)e^{ik\varphi},
\end{equation}
 where $a_{k,p}\in\mathbb{C}$, $b_{k,p}\in\mathbb{C}$ and the constants $c_{\lambda,\eta,k}$ satisfy (6).\\
From this condition it follows that the series in (10) converges in the space $C^{\infty}(\mathbb{D}_R)$ (see [5, Lemma 3.2.7]). \\
Let
\[
F_k(z)={\left(\frac{\partial}{\partial{z}}\right)}^{m-s}{\left(\frac{\partial}{\partial{\bar{z}}}\right)}^m\left(f_k(\rho)e^{ik\varphi}\right)=
\]
\begin{equation}
=\underset{\lambda\in{Z_{r_1}}}{\sum}\overset{n_\lambda-1}{\underset{\eta=0}{\sum}}
c_{\lambda,\eta,k}{\left(\frac{\partial}{\partial{z}}\right)}^{m-s}{\left(\frac{\partial}{\partial{\bar{z}}}\right)}^m\Phi_{\lambda,\eta,k}(\rho)e^{ik\varphi}.
\end{equation}
  In view of (11) we see that $F_k\ast T_1=0$ in $\mathbb{D}_{R-r_1}$, where the distribution $T_1\in \mathcal{E}^{\prime}_{\natural}(\mathbb{C})$ with support in $\overline{\mathbb{D}}_{r_1}$ is determined by the equality $\widetilde{T}_1(z)=g_{r_1}(z)$ (see [5, Theorem 1.6.5]). \\
Similarly, using Theorem 1 for $r=r_2$, we conclude that $F_k\ast T_2=0$ in $\mathbb{D}_{R-r_2}$, where $T_2\in \mathcal{E}^{\prime}_{\natural}(\mathbb{C})$ with support in $\overline{\mathbb{D}}_{r_2}$ is determined by the equality $\widetilde{T}_2(z)=g_{r_2}(z)$.\\
 If $Z(r_1,r_2)={\O}$ then from [5, Theorem 3.4.1] we conclude that $F_k=0$. \\
  Then it follows from (11) that the function $f_k(\rho)e^{ik\varphi}$ satisfies (2) for all $k\in \mathbb{Z}$. It means  that (see [5, proof of the Lemma 2.1.4]) $f$ satisfies (2). Thus the first statement of Theorem 2 is proved. \\
 We now establish the second statement. \\
  If there is $\lambda\in Z(r_1,r_2)$ then the function $f(z)=\Phi_{\lambda, 0, 0}(|z|)$ does not satisfy (2). In addition, it satisfies (1) for all $z\in \mathbb{C}$ and $r=r_1,r_2$ (see Corollary 1). Then we henceforth assume that $Z(r_1,r_2)={\O}$.\\
    Suppose that $T_1,T_2\in \mathcal{E}^{\prime}_{\natural}(\mathbb{C})$ are defined as above. If $R<r_1+r_2$, in view of  [5, Theorem 3.4.9] we conclude, that there is a nonzero radial function $f\in C^{\infty}(\mathbb{D}_R)$. It satisfies the conditions $f\ast T_1=0$ in $\mathbb{D}_{R-r_1}$ and $f\ast T_2=0$ in $\mathbb{D}_{R-r_2}$. \\
 Applying [5, Theorem 3.2.3] we infer that for $r=r_1,r_2$ the following equality holds
 \[
 f(z)=\underset{\lambda\in{Z_r}}{\sum}\overset{n_\lambda-1}{\underset{\eta=0}{\sum}}c_{\lambda,\eta}(r)\Phi_{\lambda,\eta,0}(|z|), \]
where $z\in \mathbb{D}_R$ and the constants $c_{\lambda,\eta}(r)$ satisfy (5). Moreover, these constants are not all equal to zero.\\
From this equality and Corollary 1 one deduces that $f$ satisfies (1) for $|z|<R-r$, $r=r_1,r_2$. \\
Suppose now that $f$ satisfies (2). \\
 Then $f(z)=\underset{0\leq p\leq s-1}{\sum}a_p|z|^{2p}+\overset{m-s-1}{\underset{p=0}{\sum}}b_p|z|^{2p+2s}$ in $\mathbb{D}_R$ and the convolutions $f\ast T_1$ and $f\ast T_2$ are polynomials. This means that $f\ast T_1=f\ast T_2=0$ in $\mathbb{C}$.\\
  Since $Z(r_1,r_2)={\O}$, from [5, Theorem 3.4.1] we infer that $f=0$. This contradicts by the definition of $f$. \\
Therefore, the function $f$ satisfies all the requirements of the second statement of Theorem2.


{\footnotesize

\vsk

Faculty of Mathematics and Information Technology,
 Donetsk National University\\
  Donetsk, Ukraine, Universitetskaya 24, 83001\\
odtrofimenko@gmail.com

 }

\end{document}